\newtheorem{theorem}{Theorem}
\newtheorem{corollary}[theorem]{Corollary}
\newtheorem{definition}[theorem]{Definition}
\newtheorem{lemma}[theorem]{Lemma}
\newtheorem{remark}[theorem]{Remark}
\newenvironment{proof}[1][Proof]{\textbf{#1.} }{\ \rule{0.5em}{0.5em}}
\begin{document}

\title{{\LARGE Green kernel for a random Schr\"{o}dinger operator.
\footnote{This research was partially supported by the CONACYT.}}}
\author{  
Carlos G. Pacheco
\thanks{Departamento de Matematicas, CINVESTAV-IPN, A. Postal 14-740, Mexico D.F. 07000, MEXICO. Email: cpacheco@math.cinvestav.mx}}

\maketitle

\begin{abstract}
We find explicitly the Green kernel of a random Schr\"{o}dinger operator with Brownian white noise. 
To do this, we first handle the random operator by defining it weakly using the inner product of a Hilbert space.
Then, using classic Sturm-Liouville theory, we can build the Green kernel with linearly independent solutions of a homogeneous problem.
As a corollary we have that the random operator has a discrete spectra. 
\end{abstract}

{\bf 2000 Mathematics Subject Classification: 60H25, 47B80, 60H10}
\\

\textbf{Keywords: random Schr\"{o}dinger operator, Green kernel, spectra and eigenvalues} 


\section{Introduction}

In this paper we study a random Schr\"{o}dinger operator given symbolically by the expression
\begin{equation}\label{L}
[Lf](x):=-f^{\prime\prime}(x)+B^{\prime}(x)f(x),\ x\in [0,1], 
\end{equation}
acting on a class of functions with boundary conditions $f(0)=f(1)=0$, and where $\{B(x), x\in [0,1]\}$ represents the Brownian motion (BM). 
The object (\ref{L}) is an example of a random self-adjoint operator. 
And these operators have been important models in theoretical physics, in particular in the theory of disorder systems; standard references are \cite{Carmona, Pastur}.

Our main contribution is finding the Green operator associated to $L$, which helps, among other things, to study the spectra of $L$. 
Of course, since $B^{\prime}$ does not make sense, one has to define $L$ properly.
As a corollary of our result, we have that $L$ has almost always a discrete spectrum, result which was proved in \cite{Fuku} using theory of bilinear forms.
We take however a different route to have more detailed information of $L$.

Let us give an idea of our approach. 
As it is done in classic theory of Sturm-Liouville, the idea we carry out to find the spectra of $L$ is to give the associated Green operator, which is built using two linearly independent solutions of the homogeneous problem
\begin{equation}\label{EqHsol}
-f^{\prime\prime}+B^{\prime}f=0.
\end{equation} 
Of course, we have to say precisely what it means to solve such a homogeneous problem. 
Upon integrating (\ref{EqHsol}), one might propose studying the following integro-differential equation
\begin{equation}\label{EqIntDiffEq1}
u^{\prime}(1)-u^{\prime}(x)=\int_{x}^{1}u(y)dB(y).
\end{equation}
One problem with previous equation is that one does not have the value $u^{\prime}(1)$, or one does not even know if it actually exists.
To study (\ref{EqHsol}), instead of using (\ref{EqIntDiffEq1}), we use the inner product to define a weak solution. 
Nevertheless, our definition will eventually lead to (\ref{EqIntDiffEq1}).
As it is done in the classic case, the idea of constructing the associated Green operator is through two linearly independent solutions of (\ref{EqHsol}).

In the end, two good things help to find the spectra of $L$. 
The first one is that the spectra of $L$ is directly related to the spectra of the Green operator. 
The second thing is that the Green operator is almost surely a symmetric compact operator, and one can use the spectral theorem for such an operator.
Other information that one is able to extract is related to the bilinear form. The theory of bilinear forms assures the existence of a linear operator, call it $\tilde{L}$, that can be used to define $L$. Other consequence of the main result is that one is able to know the domain of $\tilde{L}$ as the image of the Green operator.

The paper is organized in the following way. In the coming Section \ref{SecDefL}, we make a rigorous definition of $L$. 
In Section \ref{SecBil} we mention the approach in \cite{Fuku} using bilinear forms.
Section \ref{SecGreen} has the main result which is the construction of the Green operator associated to $L$. 
This section also contains the proper meaning of solving (\ref{EqHsol}).
Section \ref{SecLema} has the proof of a lemma that helps to build the linearly independent solutions of the homogeneous problem (\ref{EqHsol}).
The conclusions of the paper are left in Section \ref{Conclusion}.

\section{Preliminaries}\label{SecDefL}
In this section we properly define the stochastic operator (\ref{L}). 
First of all, let us define the following linear spaces considered over the field $\mathbb{R}$:
\begin{equation*}
H:=\{h\in L_{2}[0,1]: h(0)=h(1)=0\}
\end{equation*}
and
\begin{equation*}
H_{1}:=\{h\in H : h\text{ is absolutely continuous }\}.
\end{equation*}

Now, one could define $L$ in the following weak sense
\begin{equation}\label{EqDef0L}
\langle Lf,g \rangle:= -\int_{0}^{1}f^{\prime\prime}(x)g(x)dx+\int_{0}^{1}f(x)g(x)dB(x),
\end{equation}
where $f,g\in H$ and $f$ is twice differentiable. 
Defining a random operator in a weak sense using an inner product is an important concept; this has recently been used in random matrix theory, see e.g. \cite{Ramirez}.
We will propose a definition inspired from \cite{Skorohod}.

Instead of using equation (\ref{EqDef0L}) directly, using integration by parts, it is more convenient for us to set the following.
\begin{definition}\label{DefL}
The stochastic Schr\"{o}dinger operator $L$ is such that for any pair $f,g\in H_{1}$, 
\begin{equation*}
\langle Lf,g \rangle:=\int_{0}^{1}f^{\prime}(x)g^{\prime}(x)dx+\int_{0}^{1}f(x)g(x)dB(x).
\end{equation*}
\end{definition}
Alternatively, using the It\^{o}'s formula, one has that
\begin{equation}\label{DefL2}
\langle Lf,g \rangle=\int_{0}^{1}f^{\prime}(x)g^{\prime}(x)dx-\int_{0}^{1}(f^{\prime}(x)g(x)+f(x)g^{\prime}(x))B(x)dx,
\end{equation}
which is precisely the form used in \cite{Fuku}.

An important issue is studying the spectra of operator $L$. Taking into account Definition \ref{DefL} we have the
\begin{definition}\label{DefEigen}
It is said that a value $\lambda\in \mathbb{C}$ is an eigenvalue of $L$, 
with eigenfunction $e\in H_{1}$, if 
\begin{equation*}
(\forall h\in H_{1})\langle Le, h \rangle= \lambda \langle e,h \rangle.
\end{equation*} 
\end{definition}

Finally, it should be mentined that there is a filtered probability space $(\Omega,\mathcal{F}, P)$ that supports the Brownian motion $B$.

\subsection{Using bilinear forms}\label{SecBil}
To prove that operator $L$ of Definition \ref{DefL} has a discrete spectra, in \cite{Fuku} they used theory of bilinear forms. 
In particular, they have taken the following approach.\\
i) Using (\ref{DefL2}) they work with the bilinear form: 
\begin{equation*}
\mathcal{E}(f,g):=\int_{0}^{1}f^{\prime}(x)g^{\prime}(x)dx-\int_{0}^{1}(f^{\prime}(x)g(x)+f(x)g^{\prime}(x))B(x)dx,
\end{equation*}
which is well defined for almost every trajectory of $B$.\\
ii) Using standard theory of bilinear forms, the authors argue that $\mathcal{E}$ admits the existence of an operator $\tilde{L}$ acting in a strong sense on a domain 
$\mathcal{D}\subset H_{1}$ (i.e. such that $\tilde{L}f$ is a well defined element in $H_{1}$ for each $f\in\mathcal{D}$), and such that 
\begin{equation*}
(\forall h\in H_{1}) \mathcal{E}(f,h)=\langle \tilde{L}f, h \rangle
\end{equation*} 
iii) After this, it is argued that $\tilde{L}$ has a point spectra.

\begin{remark}\label{NoteBilinear}
Regarding point ii), we would like see more precisely what $\tilde{L}$ and $\mathcal{D}$ are.
A priori (see Definition 10.4 in \cite{Schmudgen}), one knows that $\mathcal{D}$ are those $f\in H_{1}$ that admit a function $g\in H_{1}$ such that 
\begin{equation*}
(\forall h\in H_{1}) \mathcal{E}(f, h)=\langle g, h \rangle=\int_{0}^{1}g(x)h(x)dx.
\end{equation*}
\end{remark}
One can also revise these sort of ideas in \cite{Kato}, which was originally used in \cite{Fuku}.
Other related reference is \cite{Halperin}, also cited in \cite{Fuku}, where some heuristic arguments are proposed to carry out the analysis.

\section{The Green operator}\label{SecGreen}

We want now to construct the Green operator associated to $L$, but to do that we need the concept of solving the homogeneous equation (\ref{EqHsol}). 
Moreover, we need to say what it means to have two linearly independent solutions of this problem. So, we have the
\begin{definition}\label{DefIndSolHomo}
Consider the operator $L$ of Definition \ref{DefL}. 
We say that a stochastic process $\{u(x), x\in[0,1]\}$ with differentiable sample paths is a solution of the equation $Lf=0$ if
\begin{equation*}
(\forall h\in H_{1}) \langle Lu,h \rangle=0\text{ almost surely}.
\end{equation*}
Moreover, we say that two solutions $u$ and $v$ are linearly independent if the paths are almost always linearly independent functions. 
That is, for almost all $\omega\in \Omega$, the functions $u$ and $v$ are linearly independent.
\end{definition}

In the following result, we will show that there are indeed two solutions satisfying previous definition. 
In order to have linearly independence, we will construct $u$ such that $u(0)=0$ and $u(1)=1$ always, and on the other hand we will construct another solution $v$ such that $v(0)=1$ and $v(1)=0$ always. 
This guaranties the desired property.

\begin{lemma}\label{LemaL}
There there are two linearly independent solutions $u$ and $v$ of the problem $Lf=0$. 
In fact, $u$ and $v$ are respectively unique solutions of the following stochastic differential equations (SDE),
\begin{equation*}
u(x)-x=-\int_{0}^{1}K(x,y)u(y)dB(y)
\end{equation*} 
and
\begin{equation*}
v(x)+x-1=-\int_{0}^{1}K(x,y)v(y)dB(y),
\end{equation*}  
where
\begin{equation*}
K(x,y):=
\left\{
\begin{array}{ll}
x(1-y) &  0\leq x\leq y\leq 1\\
y(1-x) &  0\leq y\leq x\leq 1
\end{array}
\right.
\end{equation*}
Furthermore, almost surely 
\begin{equation}\label{Wronskian}
\alpha:=u^{\prime}(x)v(x)-u(x)v^{\prime}(x)\text{ is a positive constant for all }x\in [0,1].
\end{equation}
\end{lemma}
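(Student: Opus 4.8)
The plan is to treat each of the two equations as a linear Fredholm-type stochastic integral equation and to solve it pathwise, after first removing the stochastic integral. Since the kernel satisfies $K(x,0)=K(x,1)=0$, an integration by parts — the same It\^o-formula manipulation that turns Definition \ref{DefL} into (\ref{DefL2}) — rewrites $\int_0^1 K(x,y)u(y)\,dB(y)$ as the pathwise Lebesgue integral $-\int_0^1 B(y)\bigl(\partial_y K(x,y)u(y)+K(x,y)u'(y)\bigr)\,dy$, where the boundary terms vanish precisely because $K$ vanishes at $y=0,1$. Thus, for almost every fixed trajectory of $B$, the equation for $u$ becomes a deterministic linear integro-differential equation whose coefficient $B$ is bounded and continuous on $[0,1]$. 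I would then solve it by Picard iteration on $C^1[0,1]$: starting from $u_0(x)=x$ and iterating, the uniform bound $\sup_{x,y}|K(x,y)|\le 1/4$ together with the bounds on $\partial_y K$ and $\|B\|_\infty$ yields a geometric contraction, giving existence and uniqueness of a $C^1$ solution $u$ for a.e. $\omega$; the argument for $v$ is identical with $u_0(x)=1-x$.

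Next I would check that this $u$ is a solution in the weak sense of Definition \ref{DefIndSolHomo}. Differentiating the integral equation in $x$ (justified since the result is continuous) gives $u'(x)=1+\int_0^x y\,u(y)\,dB(y)-\int_x^1 (1-y)u(y)\,dB(y)$. Substituting this into $\int_0^1 u'(x)h'(x)\,dx$ for $h\in H_1$ and applying stochastic Fubini (equivalently, ordinary Fubini after the pathwise reduction above), using $h(0)=h(1)=0$, collapses the double integral to $-\int_0^1 u(y)h(y)\,dB(y)$. Hence $\langle Lu,h\rangle=\int_0^1 u'h'\,dx+\int_0^1 uh\,dB=0$ for every $h\in H_1$, as required. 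Linear independence is then immediate from the prescribed boundary data: a relation $a\,u+b\,v\equiv 0$ evaluated at $x=0$ and $x=1$ forces $b=0$ and $a=0$, and this holds for a.e.\ path, which is exactly the pathwise independence demanded in Definition \ref{DefIndSolHomo}.

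For the Wronskian identity (\ref{Wronskian}) I would run the classical Sturm--Liouville argument, made rigorous through the integrated form of the equation. The formula for $u'$ above gives the increment identity $u'(b)-u'(a)=\int_a^b u(y)\,dB(y)$, and likewise $v'(b)-v'(a)=\int_a^b v(y)\,dB(y)$; these are the rigorous substitutes for the symbolic relations $u''=B'u$, $v''=B'v$. Since $u,v\in C^1$ are of bounded variation while $u',v'$ carry these stochastic increments, Riemann--Stieltjes integration by parts yields $u'(b)v(b)-u'(a)v(a)=\int_a^b v\,du'+\int_a^b u'v'\,dx=\int_a^b v\,u\,dB+\int_a^b u'v'\,dx$, and symmetrically for $u(b)v'(b)-u(a)v'(a)=\int_a^b u\,v\,dB+\int_a^b u'v'\,dx$. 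Subtracting, the $dx$-terms and the $dB$-terms cancel, so $W(b)-W(a)=0$; hence $\alpha=u'(x)v(x)-u(x)v'(x)$ is constant in $x$.

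Finally, evaluating at the endpoints gives $\alpha=W(0)=u'(0)$ and $\alpha=W(1)=-v'(1)$, and $\alpha\neq 0$ is forced by the linear independence just established (a vanishing Wronskian of two solutions of a second-order linear equation would make them proportional). The remaining claim, that the sign of $\alpha$ is positive, is the step I expect to require the most care. I would try to fix the sign by continuity from the reference trajectory $B\equiv 0$, where $u(x)=x$, $v(x)=1-x$ and $\alpha=1$, combined with the nonvanishing above. The genuine obstacle is that the sign of $\alpha$ is tied to the position of $0$ relative to the Dirichlet spectrum of $L$: writing $\tilde u$ for the initial-value solution with $\tilde u(0)=0$, $\tilde u'(0)=1$, one has $\alpha=1/\tilde u(1)$, so positivity is equivalent to $\tilde u(1)>0$. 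Controlling this quantity uniformly — in particular handling trajectories for which $\tilde u(1)$ is small — is the delicate point, and is where I would concentrate the effort.
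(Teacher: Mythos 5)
Your proposal contains one step that genuinely fails: the Picard iteration. After the pathwise integration by parts (which is correct, and is exactly how the integrals must be read here, since $u$ and $v$ depend on the whole path of $B$ and the integrals are not Itô integrals of adapted integrands), the equation for $u$ is a Fredholm-type equation over all of $[0,1]$ --- it encodes the two-point boundary conditions $u(0)=0$, $u(1)=1$ --- and not a Volterra equation, so the iterated kernels gain no factorial decay. The iteration contracts only when the operator norm, which is of order $\|B\|_{\infty}$ up to the kernel constants, is below $1$; since $\|B\|_{\infty}$ is an unbounded random variable, on an event of positive probability there is no contraction and your existence--uniqueness argument breaks down. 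Unique solvability is precisely the assertion that $1$ is not an eigenvalue of the random integral operator, i.e.\ that $0$ is not a Dirichlet eigenvalue of $L$, and it has to come from the Fredholm alternative together with an almost-sure nondegeneracy argument, not from a norm estimate. (The paper takes a different route here: it treats the integral equation via SDE theory on the two subintervals, proves differentiability by a separate claim reducing to a linear integral equation for $h=u'$ with continuous kernels, and --- most importantly --- verifies the weak-solution property of Definition \ref{DefIndSolHomo} not by your stochastic-Fubini computation but by Wong--Zakai polygonal approximations $B_{n}\to B$ in the sense of Ikeda--Watanabe, passing to the limit in $\langle Lu_{n},h\rangle=0$; it likewise obtains constancy of the Wronskian as the limit of the classical identity for $u_{n},v_{n}$ rather than by your pathwise Riemann--Stieltjes computation with $du'=u\,dB$. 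Your direct verifications of these two points are correct as computations and arguably cleaner than the approximation route, modulo the solvability gap above.)

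On positivity of $\alpha$: you leave it unproved, and you are right that it is the delicate point --- but note that the paper's own proof never addresses it either; the approximation argument yields only that $u_{n}'v_{n}-u_{n}v_{n}'$ is constant, and constancy (plus $\alpha\neq 0$ from linear independence) is all that survives the limit. In fact your own reduction $\alpha=u'(0)=1/\tilde{u}(1)$ shows that the continuity-from-$B\equiv 0$ idea cannot close this: by Sturm oscillation, $\operatorname{sign}(\tilde{u}(1))$ is negative exactly when the number of Dirichlet eigenvalues of $L$ below $0$ is odd, and since smooth potentials with exactly one negative Dirichlet eigenvalue exist and the white-noise potential has full support, this occurs with positive probability. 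So no pathwise-uniform sign control is available, and the positivity assertion in \eqref{Wronskian} cannot be established by the argument you sketch (nor by the one in the paper); only $\alpha\neq 0$, hence invertibility of the Green construction, follows from what either proof actually delivers.
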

The proof of this lemma is in Section \ref{SecLema}. 

With the solutions $u$ and $v$ of Lemma \ref{LemaL}, we can build the following random operator, which turns out to be compact and symmetric almost surely.
\begin{definition}
Take $u$ and $v$ from Lemma \ref{LemaL}. 
The stochastic Green operator associated to $L$ is given by
\begin{equation}\label{defT}
[Tf](x):=\frac{1}{\alpha}\int_{0}^{1}G(x,y)f(y)dy,\ f\in H_{1},
\end{equation}
with the Green kernel
\begin{equation*}
G(x,y):=
\left\{
\begin{array}{ll}
u(x)v(y)  &  0\leq y\leq x\leq 1\\
u(y)v(x)  &  0\leq x\leq y\leq 1,
\end{array}
\right.
\end{equation*}
and $\alpha$ is the random variable (\ref{Wronskian}).
\end{definition}

We are now in position to prove that the Green operator (\ref{defT}) is the right inverse of $L$. 
Coincidently, the term ``right inverse'' may refer to position as well as correctness.
\begin{theorem}\label{TeoLT}
The operator $T$ in (\ref{defT}) is an inverse of $L$ from the right in the sense that
\begin{equation*}\label{EqInv}
(\forall h\in H_{1}) \langle LTf,h \rangle=\langle f,h \rangle.
\end{equation*}
\end{theorem}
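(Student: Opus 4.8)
The plan is to compute $\langle LTf,h\rangle$ directly from Definition~\ref{DefL} and collapse it, using the homogeneous equations obeyed by $u$ and $v$, down to $\int_0^1 fh\,dx=\langle f,h\rangle$. Write $w:=Tf$ and introduce the two (random) primitives
\[
\phi(x):=\int_0^x v(y)f(y)\,dy,\qquad \psi(x):=\int_x^1 u(y)f(y)\,dy,
\]
so that the explicit kernel $G$ gives $w(x)=\frac{1}{\alpha}\bigl(u(x)\phi(x)+v(x)\psi(x)\bigr)$. Differentiating and using $\phi'=vf$, $\psi'=-uf$, the two terms $\pm uvf/\alpha$ cancel and one is left with the clean expression $w'=\frac{1}{\alpha}(u'\phi+v'\psi)$. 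This cancellation is exactly the classical continuity of the Green function across the diagonal, and it is what keeps $w$ differentiable with a tractable derivative.

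Next I would substitute into the bilinear form,
\[
\langle Lw,h\rangle=\int_0^1 w'h'\,dx+\int_0^1 w h\,dB,
\]
and split the result into a $u$-part and a $v$-part:
\[
\langle Lw,h\rangle=\frac{1}{\alpha}\Bigl[\underbrace{\textstyle\int_0^1 u'\phi h'\,dx+\int_0^1 u\phi h\,dB}_{I}\Bigr]+\frac{1}{\alpha}\Bigl[\underbrace{\textstyle\int_0^1 v'\psi h'\,dx+\int_0^1 v\psi h\,dB}_{II}\Bigr].
\]
The key idea is that $I$ and $II$ should \emph{not} be treated by integrating by parts in $x$ (which would demand the nonexistent second derivatives involving $B'$), but recognized through the weak homogeneous equations of Definition~\ref{DefIndSolHomo}. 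Since $\phi$ and $\psi$ are absolutely continuous and $h(0)=h(1)=0$, the products $\phi h$ and $\psi h$ again lie in $H_1$, so $\langle Lu,\phi h\rangle=0$ and $\langle Lv,\psi h\rangle=0$. Expanding $(\phi h)'=vfh+\phi h'$ and $(\psi h)'=-ufh+\psi h'$ inside these two identities isolates $I$ and $II$ exactly: each reduces to an ordinary integral, with $I$ producing a multiple of $\int_0^1 u'v\,fh\,dx$ and $II$ a multiple of $\int_0^1 uv'\,fh\,dx$.

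Adding the two contributions, the Wronskian combination $u'v-uv'$ emerges as the coefficient of $fh$; by the identity (\ref{Wronskian}) this equals the constant $\alpha$, the global factor $1/\alpha$ cancels, and one is left with $\langle Lw,h\rangle=\int_0^1 fh\,dx=\langle f,h\rangle$, which is the claim. I expect the main obstacle to be rigor rather than algebra, specifically the fact that the test functions $\phi h$ and $\psi h$ are \emph{random}, whereas Definition~\ref{DefIndSolHomo} is phrased for elements of $H_1$. The clean remedy is to read every $dB$-integral pathwise through the integration-by-parts form (\ref{DefL2}), turning the bilinear form into an ordinary Lebesgue integral in $x$ for almost every trajectory of $B$; since that form is continuous in its test argument and $H_1$ is separable, the almost-sure vanishing of $\langle Lu,\cdot\rangle$ and $\langle Lv,\cdot\rangle$ on a countable dense set extends to all of $H_1$, and in particular to the random test functions $\phi h,\psi h$. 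Checking the admissibility $\phi h,\psi h\in H_1$ and the integrability of $u,v,f$ are then the only routine verifications that remain.
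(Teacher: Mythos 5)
Your proposal is, at its core, the same proof as the paper's: the paper's add-and-subtract of $\int_0^1 v'ufh\,dx$ and $\int_0^1 u'vfh\,dx$ is exactly your device of testing the weak homogeneous equations against the random elements $\phi h$ and $\psi h$, after which the Wronskian identity (\ref{Wronskian}) cancels the prefactor $1/\alpha$. One caveat you should fix: taking the displayed kernel $G$ literally, so that $w=\alpha^{-1}(u\phi+v\psi)$ with $\phi(x)=\int_0^x vf\,dy$ and $\psi(x)=\int_x^1 uf\,dy$, one gets $w(0)=\alpha^{-1}v(0)\int_0^1 uf\,dy\neq 0$ in general (so $w\notin H_1$), and your two reductions come out as $I=-\int_0^1 u'vfh\,dx$ and $II=+\int_0^1 uv'fh\,dx$, summing to $-\alpha\langle f,h\rangle$ rather than $+\alpha\langle f,h\rangle$. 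The paper's own proof silently uses the transposed kernel, expanding $Tf(x)=\alpha^{-1}\bigl(v(x)\int_0^x uf\,dy+u(x)\int_x^1 vf\,dy\bigr)$, i.e. $G(x,y)=u(x\wedge y)v(x\vee y)$, which respects the boundary conditions $u(0)=v(1)=0$ and yields the coefficient $u'v-uv'=\alpha$ with the right sign; the discrepancy is an inconsistency in the paper's displayed definition of $G$, not in your mechanism, but your writeup inherits it and should state the kernel in the corrected orientation. On the other hand, your closing rigor point is a genuine improvement over the paper: Definition \ref{DefIndSolHomo} provides a null set for each fixed deterministic $h\in H_1$, whereas $\phi h$ and $\psi h$ are random test functions; the paper applies the weak equations to them without comment, while your pathwise reading of the form via (\ref{DefL2}), combined with continuity in the test argument and separability of $H_1$, supplies the missing single-null-set argument.
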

It can be said as well that $LT$ is the identity for almost all $\omega\in \Omega$.

\begin{proof} Notice that $Tf$ in (\ref{defT}) can be written as 
\begin{equation*}
\alpha^{-1}v(x)\int_{0}^{x}u(y)f(y)dy+\alpha^{-1}u(x)\int_{x}^{1}v(y)f(y)dy.
\end{equation*}

We notice first from the specifications of $u$ and $v$ that $Tf\in H_{1}$.

Now, from previous display we have 
\begin{eqnarray*}
\alpha \langle LTf,h \rangle&=& 
\int_{0}^{1} v^{\prime}(x)\int_{0}^{x}u(y)f(y)dy h^{\prime}(x)dx+\int_{0}^{1} v(x)u(x)f(x) h^{\prime}(x)dx\\
&+&\int_{0}^{1} u^{\prime}(x)\int_{x}^{1}v(y)f(y)dy h^{\prime}(x)dx-\int_{0}^{1} u(x)v(x)f(x) h^{\prime}(x)dx\\
&+&\int_{0}^{1} v(x)\int_{0}^{x}u(y)f(y)dy h(x)dB(x)+\int_{0}^{1} u(x)\int_{x}^{1}v(y)f(y)dy h(x)dB(x)\\
&& (\text{now we add and subtract the following term})\\
&\pm& \int_{0}^{1}v^{\prime}(x)u(x)f(x)h(x)dx \pm  \int_{0}^{1}u^{\prime}(x)v(x)f(x)h(x)dx\\
&=&\int_{0}^{1}v^{\prime}(x)\left( h(x) \int_{0}^{x}u(y)f(y)dy \right)^{\prime}dx+\int_{0}^{1}v(x)\left( h(x)\int_{0}^{x}u(y)f(y)dy \right)dB(x)\\
&+&\int_{0}^{1}u^{\prime}(x)\left( h(x) \int_{x}^{1}v(y)f(y)dy \right)^{\prime}dx+\int_{0}^{1}u(x)\left( h(x)\int_{x}^{1}v(y)f(y)dy \right)dB(x)\\
&-&\int_{0}^{1}(v^{\prime}(x)u(x)-u^{\prime}(x)v(x))f(x)h(x)dx\\
&& \text{(using Lemma \ref{LemaL})}\\
&=&\alpha \int_{0}^{1}f(x)h(x)dx= \alpha \langle f,h \rangle.
\end{eqnarray*}
\end{proof}

\begin{remark}\label{NoteEigenTL}
We can now notice how the spectra of $T$ and $L$ are related to each other. 
Basically, suppose that $\beta$ is an eigenvalue of $T$ with eigenfunction $e\in H_{1}$, so that 
$Te=\beta e$, then, due to Theorem \ref{TeoLT}, it holds that 
\begin{equation*}
(\forall h\in H_{1})\langle e,h\rangle=\langle LTe,h \rangle=\beta \langle Le,h \rangle.
\end{equation*} 
Then, $1/\beta$ is an eigenvalue of $L$ with eigenfuntion $e$. 
\end{remark}

\begin{corollary}
The operator $L$ has almost surely a discrete spectrum given by the set of eigenvalues.
\end{corollary}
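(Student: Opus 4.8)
The plan is to transfer the entire spectral question from the weakly-defined operator $L$ to the concrete integral operator $T$ and then invoke the spectral theorem for compact self-adjoint operators. The bridge is already supplied by Remark~\ref{NoteEigenTL}: reciprocals of nonzero eigenvalues of $T$ are eigenvalues of $L$. What remains is to promote this correspondence to a bijection onto the whole spectrum and to read off discreteness.

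First I would fix a realization $\omega$ in the full-measure event on which Lemma~\ref{LemaL} holds, so that $u,v$ are well-defined with continuous sample paths and $\alpha>0$. On this event the kernel $G(x,y)$ is continuous and symmetric ($G(x,y)=G(y,x)$ is immediate from its two-piece definition), so $T$ is Hilbert--Schmidt, hence compact, and being bounded and symmetric it is self-adjoint. I would then check that $T$ is injective: by Theorem~\ref{TeoLT} we have $LTf=f$ weakly, so $Tf=0$ forces $f=0$, and therefore $0$ is not an eigenvalue of $T$. With $T$ compact, self-adjoint and injective, the spectral theorem yields a complete orthonormal system $\{e_n\}$ with $Te_n=\beta_n e_n$, all $\beta_n$ real and nonzero, and $\beta_n\to 0$.

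By Remark~\ref{NoteEigenTL} each $1/\beta_n$ is then an eigenvalue of $L$ with eigenfunction $e_n$, and since $|\beta_n|\to 0$ we get $|1/\beta_n|\to\infty$, so these eigenvalues have no finite accumulation point. To see that they exhaust the spectrum I would argue the converse: suppose $\langle Le,h\rangle=\lambda\langle e,h\rangle$ for all $h\in H_1$ with $e\in H_1\setminus\{0\}$. The value $\lambda=0$ is impossible, because a nonzero weak solution of $Lf=0$ lying in $H_1$ would vanish at both endpoints, and no such combination of the basis solutions $u,v$ (with $u(0)=0,u(1)=1$ and $v(0)=1,v(1)=0$) exists. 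For $\lambda\neq 0$, set $w:=Te-\lambda^{-1}e\in H_1$; using $LTe=e$ and $Le=\lambda e$ weakly one computes $\langle Lw,h\rangle=0$ for all $h$, so $w$ is again a homogeneous solution in $H_1$ and hence $w=0$, giving $Te=\lambda^{-1}e$. Thus the eigenvalues of $L$ are exactly $\{1/\beta_n\}$, which form a discrete set.

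The step I expect to be most delicate is this last converse inclusion, namely ruling out any part of the spectrum of $L$ beyond the reciprocals of the eigenvalues of $T$. Since $L$ is defined only through a bilinear form, one must be careful about what ``spectrum'' means; the cleanest formulation is to identify $T$ with the inverse of the self-adjoint operator $\tilde{L}$ associated to the form (cf. Remark~\ref{NoteBilinear}), so that $T$ being injective and compact makes $\tilde{L}$ an operator with compact resolvent, whose spectrum is automatically purely discrete with eigenvalues $1/\beta_n\to\infty$. Finally I would observe that each ingredient---existence of $u,v$, positivity of $\alpha$, and the compactness and completeness just used---holds almost surely, and a countable intersection of these full-measure events is again of full measure, which secures the almost-sure conclusion.
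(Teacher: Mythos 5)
Your core argument is exactly the paper's: on the almost-sure event of Lemma \ref{LemaL} the kernel $G$ is continuous and symmetric, so $T$ is compact and self-adjoint; the spectral theorem then gives a discrete set of eigenvalues $\beta_n$ of $T$, and Remark \ref{NoteEigenTL} transfers the reciprocals $1/\beta_n$ to eigenvalues of $L$. The paper's published proof stops there, in three lines. Everything you add is extra, and part of it is a genuine improvement: the injectivity of $T$ via Theorem \ref{TeoLT} (if $Tf=0$ then $\langle f,h\rangle=0$ for all $h\in H_{1}$, hence $f=0$ by density) is correct and is actually needed to assert that the eigenfunctions of $T$ all carry nonzero eigenvalues.

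The one genuine gap is in your converse inclusion, precisely the step you flagged as delicate. To conclude $w:=Te-\lambda^{-1}e=0$, and likewise to exclude $\lambda=0$, you invoke the claim that any weak homogeneous solution lying in $H_{1}$ must be a linear combination of $u$ and $v$, hence zero by the boundary values $u(0)=v(1)=0$, $u(1)=v(0)=1$. But Lemma \ref{LemaL} only proves the \emph{existence} of two linearly independent solutions; nowhere is it shown that they span the space of weak solutions, and the classical two-dimensionality of the solution space of a second-order equation is not automatic for this weak, white-noise formulation. Worse, the paper's notion of solution (Definition \ref{DefIndSolHomo}) requires differentiable sample paths, whereas your $w$ is only known to be absolutely continuous (being a combination of $Te$ and $e$ in $H_{1}$), so $w$ is not even a ``solution'' in the paper's sense without a separate regularity argument. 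The cleanest repair is the route you yourself sketch at the end: use the paper's second corollary, which identifies $\mathcal{D}=T(H_{1})$ and $\tilde{L}Tf=f$, so that $T$ is the inverse of the form operator $\tilde{L}$ of Remark \ref{NoteBilinear}; then injectivity and compactness of $T$ make $\tilde{L}$ a self-adjoint operator with compact resolvent, whose spectrum is automatically purely discrete, with no appeal to the unproven dimension claim. Since the paper itself makes no attempt at the converse, your proposal, once repaired this way, is strictly more complete than the published proof.
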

\begin{proof}
This is so because $T$ is almost surely a compact symmetric operator, then from the spectral theorem we know that there is almost surely a discrete spectrum of $T$ given by the eigenvalues. 
Finally, Remark \ref{NoteEigenTL} helps to conclude the proof of the statement.
\end{proof}

\begin{corollary}
With reference to Remark \ref{NoteBilinear}, the domain $\mathcal{D}$ of $\tilde{L}$ is given by $T(H_{1})$.
\end{corollary}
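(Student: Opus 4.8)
The plan is to prove the two inclusions $T(H_{1})\subseteq\mathcal{D}$ and $\mathcal{D}\subseteq T(H_{1})$ separately, using as the main tools the characterization of $\mathcal{D}$ recalled in Remark \ref{NoteBilinear}, the identity $\mathcal{E}(f,h)=\langle Lf,h\rangle$ (which is just Definition \ref{DefL} together with (\ref{DefL2})), the right-inverse property of Theorem \ref{TeoLT}, and the uniqueness statement contained in Lemma \ref{LemaL}.

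For the inclusion $T(H_{1})\subseteq\mathcal{D}$, I would take an arbitrary $\phi=Tf$ with $f\in H_{1}$. Since the proof of Theorem \ref{TeoLT} already records that $Tf\in H_{1}$, we have $\phi\in H_{1}$, and Theorem \ref{TeoLT} gives, for every $h\in H_{1}$, the equality $\mathcal{E}(\phi,h)=\langle LTf,h\rangle=\langle f,h\rangle$. As $f$ itself lies in $H_{1}$, this is exactly the defining property of $\mathcal{D}$ from Remark \ref{NoteBilinear}, with witness $g=f$; hence $\phi\in\mathcal{D}$ and in fact $\tilde{L}\phi=f$. This direction is immediate.

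For the reverse inclusion $\mathcal{D}\subseteq T(H_{1})$, I would take $f\in\mathcal{D}$ and set $g:=\tilde{L}f\in H_{1}$, so that $\langle Lf,h\rangle=\mathcal{E}(f,h)=\langle g,h\rangle$ for all $h\in H_{1}$. The natural candidate is $f=Tg$, so I would put $w:=f-Tg\in H_{1}$ and combine the previous identity with Theorem \ref{TeoLT} applied to $g$ to obtain $\langle Lw,h\rangle=\langle g,h\rangle-\langle g,h\rangle=0$ for every $h\in H_{1}$. Thus $w$ is a weak solution of the homogeneous problem in the sense of Definition \ref{DefIndSolHomo}, and since $w\in H_{1}$ it satisfies the boundary conditions $w(0)=w(1)=0$.

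The hard part will be concluding that this forces $w\equiv 0$, i.e.\ that $L$ is injective on $H_{1}$. For this I would use the structure behind Lemma \ref{LemaL}: the weak homogeneous equation integrates to the integro-differential form (\ref{EqIntDiffEq1}), whose solutions are determined uniquely by the initial data $(w(0),w^{\prime}(0))$ through the SDE uniqueness of Lemma \ref{LemaL}, so the solution space is two-dimensional and spanned by $u$ and $v$. Writing $w=au+bv$ and evaluating at the endpoints, the normalizations $u(0)=0$, $u(1)=1$, $v(0)=1$, $v(1)=0$ yield $b=w(0)=0$ and $a=w(1)=0$, whence $w=0$ and $f=Tg\in T(H_{1})$. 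Combining the two inclusions gives $\mathcal{D}=T(H_{1})$, and the same computation identifies $\tilde{L}$ on this domain as the two-sided inverse of $T$.
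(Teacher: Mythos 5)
Your inclusion $T(H_{1})\subseteq\mathcal{D}$ is precisely the paper's proof---indeed it is the paper's \emph{entire} proof: given $f=Tg$, Theorem \ref{TeoLT} yields $\langle Lf,h\rangle=\langle g,h\rangle$ for all $h\in H_{1}$, so $f\in\mathcal{D}$ with $\tilde{L}f=g$. (The uniqueness of $g$, i.e.\ injectivity of $T$, which the paper asserts without comment, also follows from Theorem \ref{TeoLT}: $Tg=0$ forces $\langle g,h\rangle=0$ for all $h$ in the dense subspace $H_{1}$, hence $g=0$.) The paper never addresses the reverse inclusion $\mathcal{D}\subseteq T(H_{1})$, so on this point you are attempting more than the published argument, and rightly so: the corollary as stated needs it.

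However, your proof of the reverse inclusion has a genuine gap at exactly the step you flag as hard. First, a regularity issue: $w=f-Tg$ lies only in $H_{1}$, i.e.\ is merely absolutely continuous, whereas Definition \ref{DefIndSolHomo} and all the machinery of Section \ref{SecLema} concern processes with differentiable sample paths; before you may evaluate $w^{\prime}(0)$ or compare $w$ with $u$ and $v$, you must bootstrap from the weak identity $(\forall h\in H_{1})\,\langle Lw,h\rangle=0$ (say via the representation (\ref{DefL2}) and a du Bois-Reymond--type argument) to the existence of a continuous derivative satisfying (\ref{EqIntDiffEq1}); you assert this passage in a single clause, but it is not trivial and appears nowhere in the paper. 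Second, and more seriously, Lemma \ref{LemaL} does not contain the uniqueness statement you lean on: its uniqueness concerns the two boundary-value integral equations defining $u$ and $v$, with the boundary data already built into the kernel $K$ and the affine terms, not an initial-value problem. So both claims ``solutions are determined by $(w(0),w^{\prime}(0))$'' and ``the solution space is two-dimensional, spanned by $u$ and $v$'' are unsupported by anything proved in the paper. A repair closer to the paper's own toolkit: show that a weak solution $w$ with $w(0)=w(1)=0$ satisfies the homogeneous integral equation $w(x)=-\int_{0}^{1}K(x,y)w(y)dB(y)$, and then invoke the same Lipschitz-coefficient uniqueness argument the paper uses in Section \ref{SecLema}; since $w\equiv 0$ solves that equation, uniqueness gives $w=0$ directly, with no dimension count needed. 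As written, your second inclusion is a plausible sketch rather than a proof.
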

\begin{proof}
This happens because given $f\in T(H_{1})$, from Theorem \ref{TeoLT}, there exists a unique $g$ such that $f=T g$, and it holds that
\begin{equation*}
(\forall h\in H_{1}) \langle Lf,h \rangle=\langle g,h \rangle.
\end{equation*}
Thus $\tilde{L}f=g$.
\end{proof}


\section{Proof of Lemma \ref{LemaL}}\label{SecLema}

Let us first propose heuristically the independent solutions. 
Symbolically, we want to  see that there is $u$ such that it solves the following homogeneous boundary problem
\begin{equation*}
u^{\prime\prime}(x)=B^{\prime}(x)u(x)\ x\in [0,1],\ u(0)=0\ u(1)=1.
\end{equation*}
By a change of variable,
we can instead consider the equivalent formulation
\begin{equation*}
\overline{u}^{\prime\prime}(x)=B^{\prime}(x)(\overline{u}(x)+x)\ x\in [0,1],\ \overline{u}(0)=0\ \overline{u}(1)=0,
\end{equation*}
where $u(x)=\overline{u}(x)+x$. 
Now, appealing to Theorem 1 in \cite[p.215]{Cheney} we propose the following well-posed integral equation as an equivalent formulation to previous problem,
\begin{equation}\label{GreenEq}
\overline{u}(x)=-\int_{0}^{1}K(x,y)(\overline{u}(y)+y)dB(y),
\end{equation} 
where
\begin{equation*}
K(x,y):=
\left\{
\begin{array}{ll}
x(1-y) &  0\leq x\leq y\leq 1\\
y(1-x) &  0\leq y\leq x\leq 1
\end{array}
\right.
\end{equation*}

Equation (\ref{GreenEq}) is the same as
\begin{equation*}\label{EqSDEu}
u(x)-x=-\int_{0}^{1}K(x,y)u(y)dB(y),
\end{equation*} 
and from here we can establish the following SDE,
\begin{eqnarray}\label{EqIntEq}
x-u(x)&=&\int_{0}^{x}y(1-x)u(y)dB(y)+\int_{x}^{1}x(1-y)u(y)dB(y).
\end{eqnarray}
Since increments of $B$ taken in $[0,x]$ and $[x,1]$ are independent, 
we can use theory of SDEs with Lipschitz coefficients to prove that there is a unique solution. 

In a similar way we construct a stochastic process $v$ which is a candidate to solve
\begin{equation*}
v^{\prime\prime}(x)=B^{\prime}(x)v(x),\ x\in[0,1],\ v(0)=1,\ v(1)=0.
\end{equation*}
In this case, we need to solve the equation
\begin{equation*}
v(x)+x-1=-\int_{0}^{1}K(x,y)v(y)dB(y).
\end{equation*} 

\textbf{Claim.} The paths of $u$ and $v$ are differentiable functions.

\begin{proof}
Suppose for a moment that $u$ really admits a derivative $u^{\prime}$.
Using It\^{o}'s formula we have the following identities:
\begin{eqnarray*}
\int_{0}^{x}yu(y)dB(y)&=&-\int_{0}^{x}\{ yu^{\prime}(y)+u(y) \}B(y)dy+xu(x)B(x),\\
\int_{x}^{1}(1-y)u(y)dB(y)&=&-\int_{x}^{1}\{ (1-y)u^{\prime}(y)-u(y) \}B(y)dy-(1-x)u(x)B(x).
\end{eqnarray*}
After substituting previous formulas into (\ref{EqIntEq}) we have
\begin{eqnarray}\label{Equ}
x-u(x)&=& (x-1)\int_{0}^{x}\{ y u^{\prime}(y)+u(y) \}B(y)dy-x\int_{x}^{1}\{ (1-y)u^{\prime}(y)-u(y) \}B(y)dy.
\end{eqnarray}
We can take the derivative to arrive to
\begin{eqnarray}\label{EqIntDiffEq}
1-u^{\prime}(x)&=&(x-1)\{ xu^{\prime}(x)+u(x) \}B(x)+\int_{0}^{x}\{ yu^{\prime}(y)+u(y) \} B(y)dy \nonumber \\
&&+x\{ (1-x)u^{\prime}(x)-u(x)\}B(x)-\int_{x}^{1}\{(1-y)u^{\prime}(y)-u(y)\}B(y)dy\nonumber \\
&=&-u(x)B(x)-\int_{x}^{1}u^{\prime}(y)B(y)dy+c, 
\end{eqnarray}
where $c:=\int_{0}^{1}yu^{\prime}(y)B(y)dy+\int_{0}^{1}u(y)B(y)dy.$

If we set $h(x)=u^{\prime}(x)$, previous equation can be seen in the form
\begin{equation}\label{Eqh}
h(x)=f(x)+\int_{0}^{x}K_{1}(x,y)h(y)dy+\int_{x}^{1}K_{2}(x,y)h(y)dy,
\end{equation}
where $f, K_{1}$ and $K_{2}$ are known continuous functions. 
Therefore, one can check that the integral equation has a unique solution $h$, see e.g. Theorem 5 in \cite[pag. 183]{Cheney}.

We integrate equation (\ref{Eqh}) to go back to equation (\ref{Equ}). 
Thus, we end up the equation
\begin{eqnarray*}
x-\int_{0}^{x}h(y)dy&=& (x-1)\int_{0}^{x}\{ y h(y)+u(y) \}B(y)dy-x\int_{x}^{1}\{ (1-y)h(y)-u(y) \}B(y)dy.
\end{eqnarray*}
However, given $h(x)=u^{\prime}(x)$, 
the integral equation (\ref{Equ}) has also a unique solution $u$, therefore $u(x)=\int_{0}^{x}h(y)dy$.
This concludes the prove that $u$ is differentiable, and in a similar manner for $v$. 
\end{proof}

\bigskip

As a corollary, if we use apply  It\^{o}'s formula to (\ref{EqIntDiffEq}), we end up with the expression
\begin{equation}\label{EqIntDiffEq2}
u^{\prime}(x)= 1+\int_{0}^{1}yu(y)dB(y)-\int_{x}^{1}u(y)dB(y),
\end{equation}
which agrees with (\ref{EqIntDiffEq1}) and in fact gives the value of $u^{\prime}(1)$.

\bigskip
 
\begin{proof} (\textbf{of Lemma \ref{LemaL}}).
We now argue why process $u$ coming from solving the SDE (\ref{EqIntEq}) satisfies Definition \ref{DefIndSolHomo}. 
Consider a sequence $\{B_{n}(x), x\in[0,1]\}_{n=1}^{\infty}$ of piecewise approximations of $B$ in the sense of \cite[p.505]{Ikeda}:
\begin{equation*}
B_{n}(x):=n\left[ \left( \frac{j+1}{n}-x \right)B(j/n)+\left(x-\frac{j}{n} \right)B((j+1)/n)\right],
\end{equation*}
whenever $x\in [(j+1)/n, j/n]$, $j=0,1,2,\ldots$

Let $u_{n}$ be the sequence of solutions of the SDE (\ref{EqIntEq}) substituting $B$ by $B_{n}$. 
Then, each $u_{n}$ satisfies Definition \ref{DefIndSolHomo}.

Now, we can make use of the results in \cite{Ikeda} of approximation of SDEs.
First, since $u$ is a differentiable function, its It\^{o}'s integral coincides with its Stratonovich integral.
Furthermore, a quadratic covariation involving a continuous function times $u$ will be zero. 
Then, from the corollary of Theorem 7.3 in \cite[p.516]{Ikeda}, we can take a subsequence of $u_{n}$ that converges uniformly on $[0,1]$ to $u$ almost surely, abusing of the notation, let us call such subsequence again $u_{n}$.
We also see from (\ref{EqIntDiffEq2}) that $u_{n}^{\prime}(x)$ converges almost surely to $u^{\prime}(x)$ for each $x\in[0,1]$;  however, since $[0,1]$ is compact and the functions are continuous, the convergence of $u^{\prime}_{n}$ is also uniform on the whole interval $[0,1]$.
Therefore, using this uniform convergence and the fact that $u_{n}$ satisfies Definition \ref{DefIndSolHomo}, we have that for any $h \in H_{1}$ almost surely
\begin{equation*}
\langle Lu,h \rangle=\lim_{n\to\infty}\langle Lu_{n},h \rangle
=\lim_{n\to\infty}\left\{ \int_{0}^{1}u_{n}^{\prime}(x)h^{\prime}(x)dx+\int_{0}^{1}u_{n}(x)h(x)dB(x)\right\}=0.
\end{equation*} 

We can now conclude that $u$ satisfies Definition \ref{DefIndSolHomo}, and the same for $v$.

In conclusion, $u$ and $v$ are the two linearly independent solutions of the homogeneous problem such that $u(0)=v(1)=0$. 
We now want to prove (\ref{Wronskian}), but from the theory of Sturm-Liouville we also know that almost surely (see e.g. \cite[pag. 107]{Cheney})
\begin{equation*}\label{Wronskiann}
u^{\prime}_{n}(x)v_{n}(x)-u_{n}(x)v^{\prime}_{n}(x)\text{ is constant for all }x\in [0,1],
\end{equation*}
for each $n=1,2,\ldots$. 
With the approximating result mentioned above, we can finally establish (\ref{Wronskian}), and therefore Lemma \ref{LemaL} has been proved. 
\end{proof}


\section{Conclusions}\label{Conclusion}
In this paper we have studied a Schr\"{o}dinger operator with a white noise potential. 
We made use of the concept of a weak random operator and standard theory of Sturm-Liouville to propose a framework that helps to study the spectra.
In particular we were able to find the Green kernel that helps to invert the random Schr\"{o}dinger operator.

\bigskip

\textbf{Acknowledgements.} 
The author wishes to thank Nikolai Vasilevski for the fruitful discussions on bilinear forms, and Vladislav Kravchenko for few comments on solving the equation. 
We also thank the anonymous referee for helping to improve the paper.



\begin{thebibliography}{9}	

\bibitem{Carmona}  R. Carmona and J. Lacroix (1990). \textit{Spectral Theory of Random Schr\"{o}dinger Operators}, Birkh\"{a}user.

\bibitem{Cheney}  Cheney, W. (2001). \textit{Analysis for Applied Mathematics}, Springer.

\bibitem{Fuku}  M. Fukushima and S. Nakao (1977). On spectra of the Schr\"{o}dinger operator with a white Gaussian noise potential. 
\textit{Zeitshrift f\"{u}r Wahrsheinlichkeitstheorie} \textbf{37}, pp. 267--274.

\bibitem{Halperin}  B.I. Halperin (1965). Green's functions for a particle in a one-dimensional random potential. 
\textit{Physical review} \textbf{19}(1A), pp. 104--117.

\bibitem{Ikeda} N. Ikeda and S. Watanabe (1981). \textit{Stochastic Differential Equations and Diffusion Processes}, North-Holland Mathematical Library.

\bibitem{Kato} T. Kato (1966). \textit{Perturbation Theory for Linear Operators}, Springer.

\bibitem{Pastur}  L. Pastur and A. Figotin (1992). \textit{Spectra of Random and Almost-Periodic Operators}, Springer.

\bibitem{Ramirez}  J. Ram\'{i}rez, B. Rider and B. Vir\'{a}g (2011). Beta ensembles, stochastic Airy spectrum, and a diffusion. 
\textit{Journal of the American Mathematical Society} \textbf{24}, No. 4, pp. 919--644.

\bibitem{Schmudgen}  K. Schm\"{u}dgen (2012). \textit{Unbounded Self-adjoint Operators on Hilbert Space}, Springer.

\bibitem{Skorohod}  A.V. Skorohod (1984). \textit{Random Linear Operators}, D. Reidel Publishing Company.

\end{thebibliography}
\end{document}